\newtheorem{theorem}{Theorem}[section]
\newtheorem{lemma}[theorem]{Lemma}
\begin{document}
\begin{frontmatter}
\title{Numerical solution for a general class of nonlocal nonlinear wave equations}

 \author[ikbu]{Handan Borluk }
 \author[itu]{Gulcin M. Muslu\corref{cor1}}
 \ead{gulcin@itu.edu.tr}
 \cortext[cor1]{Corresponding author}
 \address[ikbu]{Istanbul Kemerburgaz University, Department of Basic Sciences, Bagcilar 34217,
         Istanbul,  Turkey.}
 \address[itu]{Istanbul Technical University, Department of Mathematics, Maslak 34469,
         Istanbul,  Turkey.}

\begin{abstract}
A class of nonlocal nonlinear wave equation arises from the modeling of a one dimensional motion in a nonlinearly, nonlocally elastic medium. The equation involves a kernel function with nonnegative Fourier transform. We discretize the equation by using Fourier spectral method in space and we prove the convergence of the semidiscrete scheme. We then use a fully-discrete scheme, that couples Fourier pseudo-spectral method in space and 4th order Runge-Kutta in time, to observe the effect of the kernel function on solutions.
To generate solitary wave solutions numerically,  we use the Petviashvili's iteration method.
\end{abstract}
\begin{keyword}
 Nonlocal nonlinear wave equation\sep Boussinesq equations \sep
Fourier pseudo-spectral method\sep  Semi-discrete scheme \sep Convergence \sep Petviashvili's iteration method

\MSC[2010] 65M70  \sep 35C07

\end{keyword}
\end{frontmatter}

\renewcommand{\theequation}{\arabic{section}.\arabic{equation}}
\setcounter{equation}{0}

\section{Introduction}
In this article, we study the nonlocal nonlinear wave equation
\begin{equation}
u_{tt}= \beta\ast(u+g(u))_{xx},  \label{nl}
\end{equation}
where $g$ is a nonlinear function of $u$. Here $\ast$ denotes the convolution in spatial domain and the kernel function $\beta(x)$
is an integrable function. Equation \eqref{nl} has been derived to model one-dimensional motion in an
infinite medium with nonlinear and nonlocal elastic properties from nonlocal elasticity theory \cite{duruk1, duruk2}.
 Duruk et. al. have proved the global existence of the Cauchy problem corresponding to \eqref{nl}
together with initial conditions
\begin{equation}
u(x,0)=\phi(x),~~~~u_t(x,0)=\psi(x)\label{ic}
\end{equation}
with enough smoothness on the initial data, and continuity condition on the nonlinear function $g$ in \cite{duruk1}. They assume that the Fourier
transform of the kernel function $\beta(x)$ satisfies
\begin{equation}
0\leq \widehat{\beta}(k)\leq C (1+k^2)^{-r/2},~~\mbox{for~all~}k\in\mathbb{R}\label{condbeta}
\end{equation}
for some constant $C>0$, $r\in \mathbb{R}$ and $r\geq 2$.
Suppose that $u$ and all its derivatives converge to zero sufficiently and
rapidly as $x\rightarrow \pm \infty$. For the solutions  of the eq. \eqref{nl}  subjected to
these boundary conditions the conserved quantity (energy) \cite{duruk1} is given by
\begin{equation*}
    E\left( t\right) =\left\Vert Pu_t \right\Vert^{2}+\left\Vert u \right\Vert^{2}+2   \int_{\Bbb R}  G\left( u\right) dx.
    \end{equation*}
\noindent
Here the operator $P$  is defined as
$$Pv=\mathcal{F}^{-1} \left( \frac{\hat{v}\left( k \right)}{\left\vert k \right\vert \sqrt{\widehat{\beta}(k)} } \right)$$
where $\mathcal{F}^{-1}$ is  the inverse Fourier transform  and $\displaystyle G(u)=\int_{0}^{u} g(s)ds$.

\noindent
For some suitable choice of the kernel function $\beta(x)$ the equation \eqref{nl} becomes well-known
nonlinear wave equations.
In the case of  the Dirac delta function,  $\beta =\delta$, the condition \eqref{condbeta} is satisfied by $r=0$ and the equation (\ref{nl}) is  the well-known
nonlinear wave equation
\begin{equation*}
    u_{tt}-u_{xx}=g\left( u\right) _{xx}.
\end{equation*}
Choosing the  exponential kernel \cite{eringen2}, $\beta(x)={\frac{1}{2}}e^{-|x|}$, the equation (\ref{nl}) becomes the improved Boussinesq equation (IBq),
\begin{equation*}
    u_{tt}-u_{xx}-u_{xxtt}=(g(u))_{xx}~.
\end{equation*}
Since $\widehat{\beta}(k )=(1+k^{2})^{-1}$, we have $r=2$. A final example is the double-exponential kernel \cite{lazar}
$$\beta (x)={\frac{1}{{2(c_{1}^{2}-c_{2}^{2})}}}(c_{1}e^{-|x|/c_{1}}-c_{2}e^{-|x|/c_{2}})$$
where $c_{1}$ and $c_{2}$ are real and positive constants.
As $\widehat{\beta}(k )=(1+\eta _{1}k ^{2}+\eta _{2}k ^{4})^{-1}$ where
$\eta _{1}=c_{1}^{2}+c_{2}^{2}$ and $\eta _{2}=c_{1}^{2}c_{2}^{2}$,  we have $r=4$.
In this case the equation (\ref{nl})  becomes the higher-order Boussinesq equation (HBq),
\begin{equation*}
    u_{tt}-u_{xx}-\eta_{1}u_{xxtt}+\eta_{2}u_{xxxxtt}=(g(u))_{xx}~.
\end{equation*}
For these special cases there are numerous studies in the literature both numerical and analytical (see \cite{borluk} and \cite{topkarci} and the references there in).  The existence and stability of solitary wave solutions of the nonlocal nonlinear wave equation \eqref{nl} has been shown in a recent study \cite{erkip}. The nonlocal term makes it harder to investigate the equation analytically.
Therefore the numerical solution is important to understand the evolution of solutions in time and the effect of the kernel on the solutions.

The existence of the convolution integral and the condition \eqref{condbeta} on the Fourier coefficients of the kernel function
$\beta(x)$ make the spectral methods a natural choice for the numerical solution. Spectral methods are widely used for local equations. Some of the  numerical studies using spectral methods for the nonlocal equations are as follows:
In \cite{pelloni2,pelloni}, the authors have studied a class of nonlinear nonlocal dispersive wave equation by using the combination of Fourier Galerkin spectral method and explicit leap-frog scheme. In \cite{kalisch}, the author has studied regularized Benjamin-Ono equation by using both Fourier Galerkin and Fourier collocation methods.

In the recent studies \cite{borluk,topkarci} a Fourier pseudo-spectral method has been used for the numerical solutions of the IBq and HBq equations.
Thus the efficiency of  the method has been tested for the special cases of this class of nonlocal equations. Therefore in this study we use Fourier pseudo-spectral method, together with a fourth order Runge-Kutta method,
to solve \eqref{nl} numerically. In Section 2, we give notations and preliminaries. In Section 3, we prove the convergence of the semi-discrete scheme. In Section 4, we introduce a fully discrete scheme combining the Fourier pseudo-spectral method and fourth order Runge-Kutta method for space and time discretization, respectively.
Although the exact forms of solitary wave solutions
are  known for the IBq and HBq equations,  we do not have the exact form of the solitary wave solutions for the general nonlocal
nonlinear wave equation. Therefore,  we use the Petviashvili's iteration  method  to construct the solitary wave profile numerically in Section 5.   Finally, we present some numerical implementation of the method in Section 6.

\section{Notations and preliminaries}
We use the function space $L^2(\Omega)$ and the Sobolev space $H^s$.  $(\cdot, \cdot)$ denotes the  $L^2$ inner product
and $\|\cdot\|$ is the corresponding norm defined, respectively as
\begin{equation}
(u,v)=\int_\Omega u(x)\overline{v(x)}dx, \hspace*{10pt}\mbox{and}\hspace*{10pt} \|u\|^2=(u,u)
\end{equation}
where $\Omega=(-L,L)$. The periodic Sobolev space $H^s_p(\Omega)$ is equipped with the norm
\begin{equation}
\|u\|_s^2= \sum_{k=-\infty}^{\infty} (1+k^{2})^s|\hat{u}_k|^2.
\end{equation}
The Banach space $X_s=C^1([0,T];H^s_p(\Omega) )$
is the space of all continuous functions in $H^s_p(\Omega)$ whose distributional derivative  is also in $H^s_p(\Omega)$, with norm
$~\left\Vert u \right\Vert_{X_s}^2=\displaystyle \max_{ t \in [0,T]}(\left\Vert u(t) \right\Vert_{s}^2+\left\Vert u_t (t) \right\Vert_{s}^2)$.
Here the Fourier coefficients of a function is given as
$$\hat{u}_k=\displaystyle\frac{1}{2L} \int_\Omega u(x)e^{{ik\pi x}/{L}} dx.$$
For a positive integer $N$ the space of trigonometric polynomials of degree $N/2$ is
\begin{equation}
S_N=\mbox{span}\{e^{{ik\pi x}/{L} } | -N/2\leq k\leq N/2-1 \}
\end{equation}
The operator $P_N:L^2(\Omega)\rightarrow S_N$ denotes the orthogonal projection defined as
\begin{equation}
P_Nu(x)=\sum_{k=-N/2}^{N/2-1} \hat{u}_k~ e^{{ik\pi x}/{L}}.
\end{equation}
So that the projection operator $P_N$ has  the property
\begin{equation}\label{orthogonal}
(u-P_Nu,\varphi)=0, \hspace*{20pt} \forall \varphi \in S_N.
\end{equation}
$P_N$ commutes with derivative in the distributional sense:
\begin{equation}\label{commute}
D_x^n P_Nu=P_ND_x^nu \hspace*{20pt} \mbox{and} \hspace*{20pt} D_t^n P_Nu=P_ND_t^n u.
\end{equation}
Here $D_x^n$ and $D_t^n$ stand
for the $n$th-order classical partial derivative with respect to $x$ and $t$, respectively.
In the rest of study, $C$ denotes a generic constant.

\setcounter{equation}{0}
\section{The semi-discrete scheme}
\setcounter{equation}{0}
In this section we present a Fourier pseudo-spectral method for the spatial discretization and
then we prove the  convergence of the semi-discrete scheme. The equation \eqref{nl} equivalently
can be written as
\begin{equation}
u_{tt}= {\cal H} [u+g(u)]_{xx}, \label{new-nl}
\end{equation}
where ${\cal H}$ is the convolution operator given by
$$ {\cal H}v(x)=\int_{\Omega}\beta(x-y)v(y)dy.$$
Then, the equation \eqref{new-nl}  becomes
\begin{equation}
{\cal H}^{-1}u_{tt}=  [u+g(u)]_{xx}. \label{new-nl2}
\end{equation}
We define  the operator ${\cal H}^{-1}$  as
${\cal H}^{-1}v= {\cal F}^{-1}({\hat{v}(k)}/{\hat{\beta}(k)})$
where  ${\cal F}^{-1}$ is the inverse Fourier transform. The semi-discrete Fourier pseudo-spectral scheme for (\ref{new-nl2}) with the initial conditions (\ref{ic}) are given as
\begin{eqnarray}
&& {\cal H}^{-1}u^N_{tt}=u^N_{xx}+P_N(g(u^N))_{xx}, \label{sd1}  \\
&& u^N(x,0)=P_N\phi(x), \hspace*{20pt} u^N_t(x,0)=P_N\psi(x) \label{sd2}
\end{eqnarray}
where $u^N(x,t)$ is a function from  $[0,T]$ to $S_N$.
\noindent We use the following lemmas for the convergence proof of the semi-discrete scheme:
\begin{lemma}\cite{canuto, rashid} \label{lemma}
For any real $0\leq \mu \leq s$, there exists a constant $C$ such that
\begin{equation}
\|u-P_N u\|_\mu \leq C N^{\mu-s} \|u\|_s,  \hspace*{20pt} \forall u\in H^s_p(\Omega).
\end{equation}
\end{lemma}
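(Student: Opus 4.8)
The plan is to work directly with Fourier coefficients, since both sides of the inequality are defined spectrally. First I would expand $u\in H^s_p(\Omega)$ as $u(x)=\sum_{k\in\mathbb{Z}}\hat u_k e^{ik\pi x/L}$; because $P_Nu$ retains exactly the modes with $-N/2\le k\le N/2-1$, the nonzero Fourier coefficients of $u-P_Nu$ are among those with $|k|\ge N/2$. Hence, straight from the definition of the $H^s_p$ norm applied with exponent $\mu$,
\[
\|u-P_Nu\|_\mu^2\le\sum_{|k|\ge N/2}(1+k^2)^\mu|\hat u_k|^2 .
\]

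The core of the argument is then a single uniform estimate. I would factor $(1+k^2)^\mu=(1+k^2)^{\mu-s}\,(1+k^2)^s$ and observe that since $\mu-s\le 0$ the map $t\mapsto(1+t^2)^{\mu-s}$ is nonincreasing in $|t|$, so on the range $|k|\ge N/2$ it is bounded by its value at $|k|=N/2$:
\[
(1+k^2)^{\mu-s}\le\Bigl(1+\tfrac{N^2}{4}\Bigr)^{\mu-s}\le\Bigl(\tfrac{N^2}{4}\Bigr)^{\mu-s}=4^{\,s-\mu}\,N^{2(\mu-s)} .
\]
Inserting this into the previous display and pulling the constant out of the sum gives
\[
\|u-P_Nu\|_\mu^2\le 4^{\,s-\mu}N^{2(\mu-s)}\sum_{|k|\ge N/2}(1+k^2)^s|\hat u_k|^2\le 4^{\,s-\mu}N^{2(\mu-s)}\|u\|_s^2,
\]
the last inequality because the remaining sum is a sub-sum of the full series defining $\|u\|_s^2$. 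Taking square roots yields the asserted bound with $C=2^{\,s-\mu}$.

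The argument is elementary and I do not anticipate a genuine obstacle; the only place the hypothesis $\mu\le s$ is actually used is the monotonicity step, which fails if $\mu>s$ since then $(1+k^2)^{\mu-s}$ grows with $k$ and cannot be controlled uniformly over the discarded modes. The sole bookkeeping subtlety is that if $N$ is odd the smallest retained frequency in the tail is $\lceil N/2\rceil$ rather than $N/2$; since $\lceil N/2\rceil\ge N/2$ the same chain of inequalities applies unchanged (absorbing at most a fixed constant), so the statement holds for every positive integer $N$. This is precisely the classical spectral approximation estimate, which is why the result is simply quoted from \cite{canuto, rashid}.
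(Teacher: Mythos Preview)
Your argument is correct and is exactly the standard spectral approximation computation: restrict to the tail $|k|\ge N/2$, factor $(1+k^2)^\mu=(1+k^2)^{\mu-s}(1+k^2)^s$, and use $\mu\le s$ to bound $(1+k^2)^{\mu-s}$ uniformly by $CN^{2(\mu-s)}$ on the tail. The minor asymmetry in the index set of $P_N$ (the mode $k=-N/2$ is retained while $k=N/2$ is not) is harmless, since you only need the inclusion of the discarded modes in $\{|k|\ge N/2\}$, which you stated as an inequality.

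As for comparison with the paper: there is nothing to compare. The paper does not prove this lemma at all; it is quoted verbatim from \cite{canuto, rashid} as a known approximation estimate and used as a black box in the proof of Theorem~\ref{theorem}. Your write-up simply supplies the classical proof behind that citation.
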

\begin{lemma}\cite{runst} \label{lemma2}
Assume that $f\in C^k(\mathbb{R})$, $u,v\in H^s(\Omega)\cap L^\infty (\Omega)$ and $k=[s]+1$, where $s\geq 0$. Then we have
\begin{equation}
\|f(u)-f(v)\|_s \leq C(M) \|u-v\|_s
\end{equation}
if $\|u\|_\infty \leq M, \|v\|_\infty \leq M, \|u\|_s \leq M$  and $\|v\|_s \leq M$, where $C(M)$ is a constant dependent
on $M$ and $s$.
\end{lemma}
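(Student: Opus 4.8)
The plan is to reduce the Lipschitz bound for the Nemytskii operator $w\mapsto f(w)$ to a product estimate in $H^s_p(\Omega)$ combined with a boundedness estimate for a composition involving $f'$. By the fundamental theorem of calculus write
\begin{equation*}
f(u)-f(v)=(u-v)\int_0^1 f'\bigl(v+\theta(u-v)\bigr)\,d\theta =: (u-v)\,h,
\end{equation*}
with $h:=\int_0^1 f'(w_\theta)\,d\theta$ and $w_\theta:=v+\theta(u-v)$. Since $f\in C^k(\mathbb R)$ the integrand is continuous in $\theta$, so $h$ is well defined, and since $\|u\|_\infty,\|v\|_\infty\le M$ we have $|w_\theta(x)|\le M$ for every $x$ and $\theta$; in particular $\|h\|_\infty\le\max_{|\xi|\le M}|f'(\xi)|=:C_1(M)$.

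Next I would invoke the Moser-type product inequality, valid for all $s\ge0$,
\begin{equation*}
\|pq\|_s\le C\bigl(\|p\|_\infty\|q\|_s+\|p\|_s\|q\|_\infty\bigr),
\end{equation*}
applied with $p=h$ and $q=u-v$, which together with the pointwise bound on $h$ gives
\begin{equation*}
\|f(u)-f(v)\|_s\le C\bigl(C_1(M)\,\|u-v\|_s+\|h\|_s\,\|u-v\|_\infty\bigr).
\end{equation*}
For the values of $s$ relevant to this paper one has the Sobolev embedding $H^s_p(\Omega)\hookrightarrow L^\infty(\Omega)$, hence $\|u-v\|_\infty\le C\|u-v\|_s$, and it only remains to bound $\|h\|_s$ by a constant $C_2(M)$ depending on $M$ and $s$. (For small $s$, where this embedding fails, the reference avoids passing through $\|u-v\|_\infty$ altogether by estimating the frequency-localized pieces of $f(u)-f(v)$ directly; I would not reproduce that technical point here.)

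The step I expect to be the real work is this composition estimate: for $F$ with enough smoothness and $w$ satisfying $\|w\|_\infty\le M$, $\|w\|_s\le M$, show $\|F(w)\|_s\le C(M)$, and then apply it with $F=f'$ and $w=w_\theta$, integrating in $\theta$ by Minkowski's inequality in $H^s_p(\Omega)$. For integer $s$ I would expand $D^s\bigl(F(w)\bigr)$ via the Fa\`a di Bruno formula into a finite sum of terms $F^{(j)}(w)\prod_{i=1}^{j}D^{\ell_i}w$ with $\ell_i\ge1$, $\sum_i\ell_i=s$ and $1\le j\le s$; bound each $\|D^{\ell_i}w\|_{L^{p_i}}$ by the Gagliardo--Nirenberg inequality $\|D^{\ell_i}w\|_{L^{p_i}}\le C\|w\|_\infty^{1-\ell_i/s}\|w\|_s^{\ell_i/s}$ with $1/p_i=\ell_i/(2s)$, so that $\sum_i 1/p_i=1/2$ and H\"older closes the $L^2$ estimate of the product; and bound each $F^{(j)}(w)$ in $L^\infty$ using $|w|\le M$. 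The hypothesis $k=[s]+1$ enters precisely here: it guarantees that all derivatives $F^{(j)}=f^{(j+1)}$ that occur are continuous, hence bounded on $[-M,M]$. Non-integer $s$ is then recovered by a Littlewood--Paley / paradifferential argument (which also subsumes the $\|u-v\|_\infty$ subtlety above), and periodicity plays no essential role beyond replacing Fourier integrals by Fourier series. Assembling the three ingredients gives $\|f(u)-f(v)\|_s\le C(M)\|u-v\|_s$ with $C(M)$ depending only on $M$ and $s$.
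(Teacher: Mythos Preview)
The paper does not prove this lemma at all: it is stated with a citation to Runst--Sickel and used as a black box, so there is no ``paper's own proof'' to compare against. Your sketch is the standard route to such Nemytskii-operator Lipschitz bounds---write $f(u)-f(v)=(u-v)\int_0^1 f'(w_\theta)\,d\theta$, apply the Moser product estimate, and control $\|f'(w_\theta)\|_s$ via Fa\`a di Bruno combined with Gagliardo--Nirenberg interpolation---and it is essentially what the cited reference does (the full argument there handles all $s\ge 0$, including $s\le 1/2$, by paraproduct decomposition rather than by invoking $H^s\hookrightarrow L^\infty$, exactly as you flag). Your outline is correct and adequate for the uses made of the lemma in this paper; if anything, you might note explicitly that $\|w_\theta\|_s\le M$ follows from convexity of the $H^s$-ball, which is what lets the composition bound be uniform in $\theta$ before integrating.
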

\begin{lemma}\label{lemma3}
Let $r\geq 2$ and $v\in H^{r/2}(\Omega)$. If the kernel function $\beta(x)$ satisfies the condition \eqref{condbeta}, then there exists a constant $C$ such that
\begin{equation}
\|v\|_{r/2} \leq C \|{\cal H}^{-1/2}v\|.  \label{opesz}
\end{equation}
\end{lemma}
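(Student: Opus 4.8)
The plan is to pass to Fourier coefficients, where both the $H^{r/2}$ norm and the operator $\mathcal{H}^{-1/2}$ act diagonally, and then reduce the claimed inequality to the pointwise bound on the multipliers supplied by condition \eqref{condbeta}. First I would write, for $v\in H^{r/2}(\Omega)$,
\[
\|v\|_{r/2}^2=\sum_{k=-\infty}^{\infty}(1+k^2)^{r/2}\,|\hat v_k|^2,
\]
and observe that the operator $\mathcal{H}^{-1/2}$ is the Fourier multiplier with symbol $\widehat\beta(k)^{-1/2}$, so that
\[
\|\mathcal{H}^{-1/2}v\|^2=\sum_{k=-\infty}^{\infty}\frac{|\hat v_k|^2}{\widehat\beta(k)}.
\]
(Here one uses that $\widehat\beta(k)>0$; the hypothesis gives $\widehat\beta(k)\ge 0$, and the degenerate case $\widehat\beta(k)=0$ only makes the right-hand side larger, so it may be handled by a limiting argument or simply excluded on the relevant discrete set of frequencies.)

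The heart of the matter is then the elementary inequality
\[
(1+k^2)^{r/2}\le C\,\frac{1}{\widehat\beta(k)}\qquad\text{for all }k,
\]
which is exactly \eqref{condbeta} rearranged: from $\widehat\beta(k)\le C(1+k^2)^{-r/2}$ we get $(1+k^2)^{r/2}\le C\,\widehat\beta(k)^{-1}$. Multiplying by $|\hat v_k|^2$ and summing over $k$ yields
\[
\|v\|_{r/2}^2=\sum_k (1+k^2)^{r/2}|\hat v_k|^2\le C\sum_k\frac{|\hat v_k|^2}{\widehat\beta(k)}=C\,\|\mathcal{H}^{-1/2}v\|^2,
\]
and taking square roots (absorbing the constant) gives \eqref{opesz}.

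The only genuine obstacle is a bookkeeping one: making sure the quantity $\|\mathcal{H}^{-1/2}v\|$ is meaningful and that passing between the convolution operator $\mathcal{H}$ (defined via $\beta(x-y)$ on $\Omega$) and the Fourier-multiplier description $\widehat\beta(k)^{-1/2}$ is legitimate on the periodic interval. I would address this by taking the multiplier definition of $\mathcal{H}^{-1/2}$ as primitive, consistent with the definition of $P$ and of $\mathcal{H}^{-1}$ already given in the text, so that $\mathcal{H}^{-1/2}v\in L^2$ precisely encodes $\sum_k|\hat v_k|^2/\widehat\beta(k)<\infty$. Everything else is the diagonalization above; no hard analysis is required, since condition \eqref{condbeta} does all the work and Lemma \ref{lemma3} is really just its restatement at the level of norms.
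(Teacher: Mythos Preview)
Your argument is correct and is precisely the approach the paper has in mind: the authors simply state that the proof ``directly follows from the definition of the operator $\mathcal{H}^{-1/2}$ and the condition \eqref{condbeta},'' which is exactly the Fourier-side diagonalization and multiplier comparison you wrote out.
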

\begin{proof} The proof directly follows from the definition of the operator ${\cal H}^{-1/2}$ and  the condition \eqref{condbeta}.
\end{proof}

\noindent
The following theorem states our main result.
\begin{theorem} \label{theorem}
Let  $s\geq r/2$,  $g\in C^2(\mathbb{R})$ and $u(x,t)$ be the solution of the periodic initial value problem \eqref{nl}-\eqref{ic} satisfying
$u(x,t)\in C^1([0,T];H^s_p(\Omega) ) $ for any $T>0$ and  $u^N(x,t)$ be the solution of the semi-discrete scheme (\ref{sd1})-(\ref{sd2}).
There exists a constant $C$, independent of $N$, such that
\begin{equation}
\|u-u^N\|_{X_{r/2}} \leq C(T) N^{r/2-s}  \|u\|_{X_s}
\end{equation}
for the initial data $\phi,\psi \in H^s_p(\Omega)$.
\end{theorem}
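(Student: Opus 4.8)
The plan is to estimate the error $u-u^N$ by splitting it through the projection: write $u-u^N = (u - P_N u) + (P_N u - u^N)$ and set $\theta = P_N u - u^N \in X_{r/2}$ (in fact in $X_s$). The first piece is controlled immediately by Lemma~\ref{lemma}, which gives $\|u - P_N u\|_{r/2} \le C N^{r/2-s}\|u\|_s$ and similarly for the time derivative, so the whole burden falls on estimating $\theta$. To that end I would apply $P_N$ to the exact equation \eqref{new-nl2}, use that $P_N$ commutes with $D_x$ and $D_t$ (equation \eqref{commute}) and with ${\cal H}^{-1}$ (since both act as Fourier multipliers), and subtract the scheme \eqref{sd1}. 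This yields an evolution equation for $\theta$ of the form ${\cal H}^{-1}\theta_{tt} = \theta_{xx} + P_N\big(g(u) - g(u^N)\big)_{xx}$, with initial data $\theta(\cdot,0)=0$, $\theta_t(\cdot,0)=0$ because the scheme is initialized with the exact projections.

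The core of the argument is an energy estimate for $\theta$. I would pair the $\theta$-equation with $-\theta_t$ in the $L^2$ inner product (integrating the $\theta_{xx}$ and $g$ terms by parts, which is legitimate since everything lives in $S_N$ and is periodic), producing the natural semi-discrete energy
\begin{equation*}
{\cal E}(t) = \|{\cal H}^{-1/2}\theta_t\|^2 + \|\theta_x\|^2.
\end{equation*}
Its time derivative reduces to a single forcing term of the shape $2\big(P_N(g(u)-g(u^N))_x,\theta_{tx}\big)$ or, after another integration by parts, a term coupling $(g(u)-g(u^N))$ to ${\cal H}^{-1}\theta_{tt}$; in either arrangement one bounds it by $C\big(\|g(u)-g(u^N)\|_{r/2}^2 + {\cal E}(t)\big)$ using Cauchy--Schwarz together with Lemma~\ref{lemma3} (to pass from ${\cal H}^{-1/2}\theta_t$ back to an $H^{r/2}$-type norm where needed) and the fact that $r \ge 2$ so that the single spatial derivative in the forcing is absorbed. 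Then I would split $g(u) - g(u^N) = \big(g(u) - g(P_N u)\big) + \big(g(P_N u) - g(u^N)\big)$: the first difference is $O(N^{r/2-s}\|u\|_s)$ by Lemma~\ref{lemma2} combined with Lemma~\ref{lemma}, and the second is bounded by $C\|P_N u - u^N\|_{r/2} = C\|\theta\|_{r/2} \le C\,{\cal E}(t)^{1/2} + C N^{r/2-s}$ (the last inequality again via Lemma~\ref{lemma3}, controlling the low modes of $\theta$ by the energy plus a projection error). Here one must check the hypotheses of Lemma~\ref{lemma2}: $g\in C^2$ suffices since $s \ge r/2 \ge 1$, and the uniform bounds $\|u\|_\infty, \|P_N u\|_\infty, \|u^N\|_\infty \le M$ follow from the assumed regularity $u \in C^1([0,T];H^s_p)$ with $s \ge 1$, the Sobolev embedding $H^s \hookrightarrow L^\infty$, and stability of $P_N$ on $L^\infty$ for the first two, with the bound on $u^N$ justified a posteriori on the interval where the estimate holds (a standard continuation/bootstrap argument).

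Assembling these pieces gives a differential inequality $\frac{d}{dt}{\cal E}(t) \le C\,{\cal E}(t) + C N^{r-2s}\|u\|_{X_s}^2$ with ${\cal E}(0)=0$, and Gronwall's inequality yields ${\cal E}(t) \le C(T) N^{r-2s}\|u\|_{X_s}^2$ on $[0,T]$. Using Lemma~\ref{lemma3} once more to recover $\|\theta\|_{r/2}$ and $\|\theta_t\|_{r/2}$ from ${\cal E}(t)$ — controlling the finitely many modes $|k| \le N/2$ where ${\cal H}^{-1}$ may be large is where the bound on the kernel really enters, though since $\theta \in S_N$ and $\widehat\beta(k)>0$ this is just a comparison of multipliers on a truncated spectrum — and combining with the projection estimate for $u - P_N u$ gives the claimed bound $\|u - u^N\|_{X_{r/2}} \le C(T) N^{r/2-s}\|u\|_{X_s}$. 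The main obstacle I anticipate is the bookkeeping around the operator ${\cal H}^{-1}$: one has to be careful that the energy ${\cal E}$ is coercive for the $H^{r/2}$-norm of $\theta_t$ (which is exactly the content of Lemma~\ref{lemma3}, requiring the lower bound $\widehat\beta(k)\le C(1+k^2)^{-r/2}$ — note this is an \emph{upper} bound on $\widehat\beta$, which is what makes $\|v\|_{r/2} \le C\|{\cal H}^{-1/2}v\|$ hold), and that the single derivative in the nonlinear forcing term is handled without losing a power of $N$ — this works precisely because $r/2 \ge 1$, so one derivative costs at most what the $H^{r/2}$ control of $\theta$ affords.
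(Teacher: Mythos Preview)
Your overall strategy matches the paper's: split $u-u^N$ through $P_Nu$, derive an energy inequality for $\theta = P_Nu-u^N$ by testing against $\theta_t$, invoke Lemmas~\ref{lemma}--\ref{lemma3}, and close with Gronwall. There is, however, one genuine gap: the control of $\|\theta\|_{r/2}$. Your energy ${\cal E}(t)=\|{\cal H}^{-1/2}\theta_t\|^2+\|\theta_x\|^2$ contains only a single $L^2$ derivative of $\theta$; for $r>2$ this does \emph{not} dominate $\|\theta\|_{r/2}$, and even for $r=2$ it misses the zero Fourier mode. Hence your claimed bound $\|\theta\|_{r/2}\le C\,{\cal E}(t)^{1/2}+CN^{r/2-s}$, which you attribute to Lemma~\ref{lemma3}, is not justified: Lemma~\ref{lemma3} relates $\|\theta_t\|_{r/2}$ to $\|{\cal H}^{-1/2}\theta_t\|$ (both present in ${\cal E}$), but says nothing linking $\|\theta\|_{r/2}$ to ${\cal E}$. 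You need this bound both to estimate the nonlinear forcing and to extract the final $X_{r/2}$ estimate for~$\theta$.

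The paper repairs this by augmenting the energy: it adds the terms $(\theta,\theta_t)+\sum_{i=2}^{r/2}(D_x^i\theta,D_x^i\theta_t)$ to both sides of the differential inequality, so that the left-hand side becomes $\tfrac{1}{2}\tfrac{d}{dt}\big(\|{\cal H}^{-1/2}\theta_t\|^2+\|\theta\|_{r/2}^2\big)$ while the added right-hand terms are bounded by $\|\theta\|_{r/2}^2+\|\theta_t\|_{r/2}^2\le \|\theta\|_{r/2}^2+C\|{\cal H}^{-1/2}\theta_t\|^2$. Gronwall is then applied to $\|{\cal H}^{-1/2}\theta_t\|^2+\|\theta\|_{r/2}^2$ rather than to your ${\cal E}$. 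An alternative fix, since $\theta(0)=0$, would be to write $\|\theta(t)\|_{r/2}\le\int_0^t\|\theta_\tau\|_{r/2}\,d\tau\le C\int_0^t{\cal E}(\tau)^{1/2}\,d\tau$ and feed this into a Gronwall-type argument; but some device of this sort is required, and your proposal does not supply one.
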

\begin{proof}
To estimate the term $\displaystyle \|u-u^N\|_{X_{r/2}} $ we first use the triangle inequality to write
\begin{equation} \label{u-triangle}
\|u-u^N\|_{X_{r/2}}\leq\|u-P_Nu\|_{X_{r/2}}+\|P_Nu-u^N\|_{X_{r/2}}.
\end{equation}
For the first term on the right-hand side, we use Lemma \ref{lemma} and the fact that the projection operator
$P_N$ commutes with the differentiation, to obtain the following estimate
\begin{equation}
\|(u-P_Nu)_t(t)\|_{r/2}\leq C N^{r/2-s}\|u_t(t)\|_{s}, \label{first}
\end{equation}
for  $s\geq r/2$. Thus, we bound the first term in (\ref{u-triangle}) as
\begin{equation} \label{estimation1}
\|u-P_Nu\|_{X_{r/2}}\leq C N^{r/2-s}\|u\|_{X_s}.
\end{equation}
To estimate the second term  $\|P_Nu-u^N\|_{X_{r/2}}$ at the right-hand side of the inequality \eqref{u-triangle},
we subtract the equation \eqref{sd1} from \eqref{new-nl}  and take the inner product with $\varphi\in S_N$;
\begin{equation}\label{inner}
\left({\cal H}^{-1}(u-u^N)_{tt}- (u-u^N)_{xx}-(g(u)-P_Ng(u^N))_{xx},~\varphi\right)=0.
\end{equation}
Orthogonality of the projection operator $P_N$ gives
\begin{equation}
\left({\cal H}^{-1}(u-P_Nu)_{tt},~\varphi\right)=0
\end{equation}
for all  $\varphi\in S_N$, using this result together with \eqref{orthogonal}   the equation \eqref{inner} turns to
\begin{equation} \label{simpinner}
\left({\cal H}^{-1}(P_Nu-u^N)_{tt}- (P_Nu-u^N)_{xx}-(g(u)-P_Ng(u^N))_{xx},~\varphi\right)=0.
\end{equation}
We now set  $\varphi=(P_Nu-u^N)_{t}$  in \eqref{simpinner}. Since ${\cal H}^{-1/2}$ is a self-adjoint operator, we have
\begin{equation}
\left({\cal H}^{-1}(P_Nu-u^N)_{tt},~(P_Nu-u^N)_{t}\right)=\frac{1}{2}\frac{d}{dt}\|{\cal H}^{-1/2}(P_Nu-u^N)_{t}(t)\|^2. \label{norm1}
\end{equation}
Spatial periodicity and the integration by parts yield
\begin{equation}
\left((P_Nu-u^N)_{xx},~(P_Nu-u^N)_{t}\right)=-\frac{1}{2}\frac{d}{dt}\|(P_Nu-u^N)_{x}(t)\|^2. \label{norm2}
\end{equation}

\noindent

For the nonhomogeneous term in \eqref{simpinner} we use the Cauchy-Schwarz \mbox{inequality}, triangle inequality and  Lemma \ref{lemma2}. Therefore we have
\begin{eqnarray}\label{right-hand}
&& \hspace*{-50pt}\left( (g(u)-g(u^N))_{xx},(P_Nu-u^N)_{t}\right)  \notag \\
\hspace*{30pt} && \leq \left|\left( (g(u)-g(u^N))_{x},~(P_Nu-u^N)_{xt}\right)\right| \notag \\
\hspace*{30pt} && \leq \|(g(u)-g(u^N))_{x}\|~\|(P_Nu-u^N)_{xt}(t)\|\notag \\
\hspace*{30pt} && \leq \frac{1}{2}\left( \|g(u)-g(u^N)\|_1^2+\|(P_Nu-u^N)_{t}(t)\|_1^2\right)\notag \\
\hspace*{30pt} && \leq C \left( \|(u-u^N)(t)\|_1^2+\|(P_Nu-u^N)_{t}(t)\|_1^2\right)\notag\\
\hspace*{30pt} && \leq C \left( \|(u-P_Nu)(t)\|_{r/2}^2+\|(P_Nu-u^N)(t)\|_{r/2}^2+ \|(P_Nu-u^N)_{t}(t)\|_1^2\right) \notag \\ \label{nonhom}
\end{eqnarray}
for $r\geq 2$. Combining the results of \eqref{norm1}-\eqref{nonhom} the equation \eqref{simpinner} becomes
\begin{eqnarray} \label{inner3}
&&\hspace{-15pt}\frac{1}{2}\frac{d}{dt}\left( \|{\cal H}^{-1/2}(P_Nu-u^N)_{t}(t)\|^2 + \|(P_Nu-u^N)_{x}(t)\|^2 \right )\notag\\
&&\hspace{15pt}\leq C \left( \|(u-P^Nu)(t)\|_{r/2}^2+\|(P^Nu-u)(t)\|_{r/2}^2+\|(P_Nu-u^N)_{t}(t)\|_1^2\right). \notag \\
\end{eqnarray}
We now add the terms
$$ \left( P_Nu-u^N, (P_Nu-u^N)_{t} \right )+ \sum_{i=2}^{r/2}\left( D^i_x(P_Nu-u^N), D^i_x(P_Nu-u^N)_{t} \right ) $$
to both sides of the inequality \eqref{inner3} and use that
$$\left( D^i_x(P_Nu-u^N), D^i_x(P_Nu-u^N)_{t} \right )=\frac{1}{2}\frac{d}{dt}\| D^i_x(P_Nu-u^N)(t) \|^2, $$
and
$$\left( D^i_x(P_Nu-u^N), D^i_x(P_Nu-u^N)_{t} \right )
\leq C \left ( \| D^i_x(P_Nu-u^N)(t) \|^2 + \| D^i_x(P_Nu-u^N)_{t} (t)\|^2\right ).$$
Using all these results and Lemma \ref{lemma3}, the inequality \eqref{inner3}  becomes
\begin{eqnarray}
&&\frac{1}{2}\frac{d}{dt}\left( \|{\cal H}^{-1/2}(P_Nu-u^N)_{t}(t)\|^2 + \|(P_Nu-u^N)(t)\|_{r/2}^2 \right ) \notag \\
&& \hspace*{40pt}\leq C \left ( \|(u-P_Nu)(t) \|_{r/2}^2 + \|(P_Nu-u^N)(t) \|_{r/2}^2
+ \| (P_Nu-u^N)_{t} (t)\|_{r/2}^2\right ) \notag \\
&& \hspace*{40pt}\leq C \left ( \|(u-P_Nu)(t) \|_{r/2}^2 + \|(P_Nu-u^N)(t) \|_{r/2}^2
+  \|{\cal H}^{-1/2}(P_Nu-u^N)_{t}(t)\|^2  \right ). \notag
\end{eqnarray}
Note that  $ \|{\cal H}^{-1/2}(P_Nu-u^N)_{t}(0) \|^2 = \| (P_Nu-u^N) (0)\|^2=0 $. Using Gronwall Lemma followed by Lemma \ref{lemma}  we conclude that
\begin{eqnarray}
&&\hspace*{-90pt} \|{\cal H}^{-1/2}(P_Nu-u^N)_t(t)\|^2 + \| (P_Nu-u^N) (t) \|_{r/2}^2 \notag \\
&& \hspace*{20pt} \leq \int_0^t \| u(\tau)-P_Nu(\tau) \|_{r/2}^2~ e^{C(t-\tau)} d\tau\notag  \\
                       && \hspace*{20pt} \leq  C(T) ~ N^{r-2s} \int_0^t \| u(\tau)\|_s^2  ~d\tau
                         \label{esz}\end{eqnarray}
for $s\geq r/2$. Thanks to Lemma \ref{lemma3}, we know that
\begin{eqnarray}
&&\hspace*{-50pt} \|(P_Nu-u^N)_{t}(t)\|_{r/2}^2 + \| (P_Nu-u^N) (t) \|_{r/2}^2  \notag \\
&& \hspace*{30pt} \leq \|{\cal H}^{-1/2}(P_Nu-u^N)_{t}(t)\|^2 + \| (P_Nu-u^N)_t (t) \|_{r/2}^2.  \notag
\end{eqnarray}
Finally, we have the estimate
\begin{equation}
\| P_Nu-u^N \|_{X_{r/2}} \leq C(T)  N^{r/2-s}~ \| u\|_{X_s}. \label{estimation2}
\end{equation}

\noindent
The inequalities \eqref{estimation1} and \eqref{estimation2}  give the estimation for \eqref{u-triangle}, which  \mbox{completes} the proof of Theorem \ref{theorem}.
\end{proof}

\noindent
Note that the result of the Theorem \ref{theorem} coincides with the special cases of the  equation \eqref{nl},
the IBq  and HBq equations, for which $r=2$ and $r=4$, respectively \cite{borluk,topkarci}.
\setcounter{equation}{0}
\section{The fully-discrete scheme}

\vspace*{-5pt}
We solve the  equation \eqref{nl} by combining a Fourier pseudo-spectral method for the space component
and a fourth-order Runge Kutta scheme (RK4) for time. If the spatial period $[-L,\, L]$ is
normalized to $[0,2\pi]$ using the transformation
\mbox{$X=\pi(x+L)/L$}, the equation \eqref{nl} with $g(u)=u^p$ becomes
\begin{equation}\label{nonlocal-zero-2pi}
u^N_{tt}= \left(\frac{\pi}{L}\right)^2(\beta \ast (u^N+(u^N)^p))_{XX}.
\end{equation}
The space interval $[0,2\pi]$ is divided into $N$ equal subintervals with grid spacing
$\Delta X=2\pi/N$, where the integer $N$ is even. The spatial grid points are then given by
$X_{j}=2\pi j/N$,  $j=0,1,2,...,N$. The time interval $[0,T]$ is divided into $M$ equal subintervals with grid spacing
$\Delta t=T/M$.  The approximate solution to
$u^N(X_{j},t)$ is denoted by $U_{j}(t)$. Applying the discrete Fourier transform to the equation \eqref{nonlocal-zero-2pi}
we have
\begin{equation}
(\tilde{U}_k)_{tt}= -\left(\frac{\pi k}{L}\right)^2\,\tilde{\beta}_k\,[\,\tilde{U}_k+(\tilde{U}^p)_k\,]. \label{fourier}
\end{equation}
The  above equation  can be written as an ordinary differential equation system
\begin{eqnarray}
&& (\tilde{U}_k)_t=\tilde{V}_k \label{sis1} \\
&& (\tilde{V}_k)_t= -(\pi k/L)^2\,\tilde{\beta}_k\,[\,\tilde{U}_k+(\tilde{U}^p)_k\,].\label{sis2}
\end{eqnarray}
In order to handle the nonlinear term we use a pseudo-spectral approximation.
We use the fourth order Runge-Kutta method to solve the resulting ODE system
\eqref{sis1}-\eqref{sis2} in time. Finally, we find the approximate solution by using the inverse discrete Fourier transform.

\setcounter{equation}{0}
\section{The Petviashvili's iteration method}
One of the most effective numerical methods to generate solitary wave solution was first  proposed by V. I. Petviashvili  for the  Kadomtsev-Petviashvili equation
in \cite{petviashvili}. Since we do not have the exact form of the solitary wave solutions for the general nonlocal
nonlinear wave equation,  we use the Petviashvili's iteration  method \cite{pelinovsky, petviashvili, yang} to construct the solitary wave solution numerically.

\par
The solitary wave solutions of \eqref{nl} are of the form $u(x,t)=\phi(x-ct)$, where $c$ is the propagation speed of a solitary wave.
Substituting this solution into \eqref{nl} and then
integrating twice and using the asymptotic boundary conditions, we have \begin{equation}
c^2 \phi=[\beta * (\phi+\phi^p)]. \label{iteras}
\end{equation}
If we use the Fourier transform,
\begin{equation}
u(x)=\frac{1}{2\pi} \int_{-\infty}^{\infty} \hat{u} (k)e^{ik x} dk, \hspace*{30pt} \hat{u}(k)=\int_{-\infty}^{\infty} u(x) e^{-ik x} dx,
\end{equation}
the eq. \eqref{iteras} can be written as
\begin{equation}
[c^2-\hat{\beta}(k)]\hat{\phi}
(k)= \hat{\beta} (k) \widehat{\phi^p}(k). \label{iteras2}
\end{equation}
A simple iterative algorithm for numerical calculation of $\hat{\phi}(k)$ for the above  eq. can be proposed in the form
\begin{equation}
\hat{\phi}_{n+1}(k)= \frac{\hat{\beta} (k)}{c^2-\hat{\beta}(k)}\widehat{\phi_n^p}(k). \label{algor}
\end{equation}
where $\hat{\phi}_n (k)$ is the Fourier transform of $\phi_n (x)$ which is the   $n^{th}$ iteration of the numerical solution.
Although there exists a fixed point $\hat{\phi}_n (k)$ of the eq. \eqref{iteras}, the algorithm \eqref{algor} diverges. To ensure the convergence, we
add a stabilizing factor $M_n$ as in \cite{petviashvili}. The new algorithm for the nonlocal nonlinear wave eq. can be proposed in the form
\begin{equation}
\hat{\phi}_{n+1}(k)= M_n^\gamma \frac{\hat{\beta} (k)}{c^2-\hat{\beta}(k)}\widehat{\phi^p_n}(k). \label{newalgor}
\end{equation}
where the stabilizing factor is
\begin{equation}
M_n=\frac{\int_{-\infty}^{\infty} (c^2-\hat{\beta}(k))~ [\hat{\phi}_n(k)]^2 dk }{\int_{-\infty}^{\infty} \hat{\beta} (k) \widehat{\phi^p_n}(k)\hat{\phi}_n(k) dk  } \label{stab}
\end{equation}
and $\gamma$ is a free parameter. Note that  we can only construct the solitary wave solutions for the nonlocal nonlinear wave eq. under the assumption
\begin{equation}
c^2-\hat{\beta}(k) \neq 0.  \label{condc}
\end{equation}
for all $k\in \mathbb{R}$.

\setcounter{equation}{0}
\section{Numerical implementation}

In this section we present some numerical results  to understand the effect of the kernel function $\beta(x)$ on the solution.
Throughout the section we consider the quadratic nonlinearity, i.e. $g(u)=u^2$.
In the first numerical experiment, we  consider the  general kernel function satisfying \eqref{condbeta} whose Fourier transform is
\begin{equation}
\hat{\beta}(k)=\frac{1}{1+k^2+\eta k^2 \sin(k^2)}. \label{kernelsin}
\end{equation}
Here $\eta$ is a positive parameter.  Note that $\hat{\beta}(k)$ is a decreasing function of $k$.
Therefore, the speed must  be chosen as  $c^2>1$  to satisfy the condition \eqref{condc}.
First, we construct a solitary wave solution corresponding to the above kernel for $\eta=1$
by using the  Petviashvili's iteration method. For this aim, we use the pseudo-spectral approximation for the eq. \eqref{newalgor}.
The initial guess is
\begin{equation}
\phi_0(x)=e^{-x^2}
\end{equation}
and we take the  the speed  as $c=1.08$. In the left panel of  Figure 1, we show  the solitary wave profile of the eq. \eqref{nl}  on the interval $[-100, 100]$ with $N=1024$. In the right panel of Figure 1, we  show the variation of the logarithm of the residual error with the  number of iterations. Here the residual error is defined as
\begin{equation}
\mbox{Residual error}= \|M \phi_n\|_\infty
\end{equation}
where
\begin{equation}
M\phi=c^2 \phi-[\beta * (\phi+\phi^p)].
\end{equation}
We observe that the fastest convergence occurs when $\gamma=2$.
\vspace*{-10pt}
\begin{figure}[h!bt]
\includegraphics[width=5.5in]{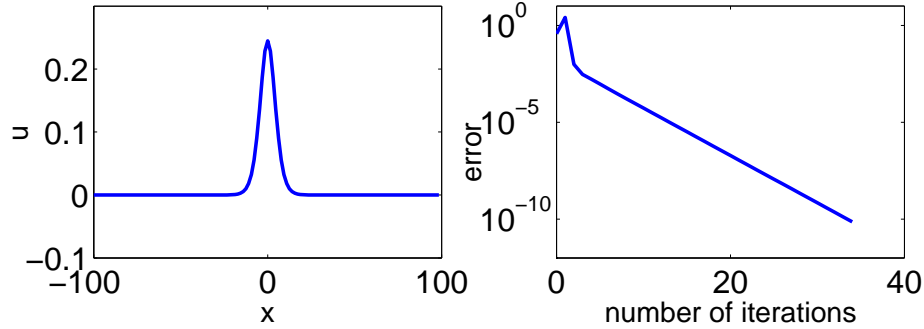}

\vspace*{-140pt}
\caption{ \small{The numerical solitary wave profile for the speed $c=1.08$ and the variation of the logarithm of the residual error with the  number of iterations.}}
\end{figure}

\vspace*{10pt}
\noindent
To investigate the time evolution of the wave, the initial condition $u(x,0)$ is chosen  as the profile constructed by the Petviashvili's  iteration  method (left panel of Figure 1).
On the other hand, we also need the initial condition $u_t(x,0)$.
The traveling wave solutions of the eq. \eqref{nl} are given in the form $u(x,t)=\phi(x-ct)$. Therefore,  the initial condition $u_t(x,0)$ is chosen as   $u_t(x,0)=-cu_x(x,0)$ and the $x$-derivative is evaluated by using discrete Fourier transform. In the left panel of  Figure 2, we illustrate the time evolution of the wave  corresponding to these initial conditions.
In the right panel of Figure 2, the variation of the change in the conserved quantity $E$ (energy)
with time is presented.

\vspace*{10pt}
\begin{figure}[h!bt]
\begin{minipage}[t]{0.44\linewidth}
\hspace*{-55pt}
\includegraphics[width=3.6in]{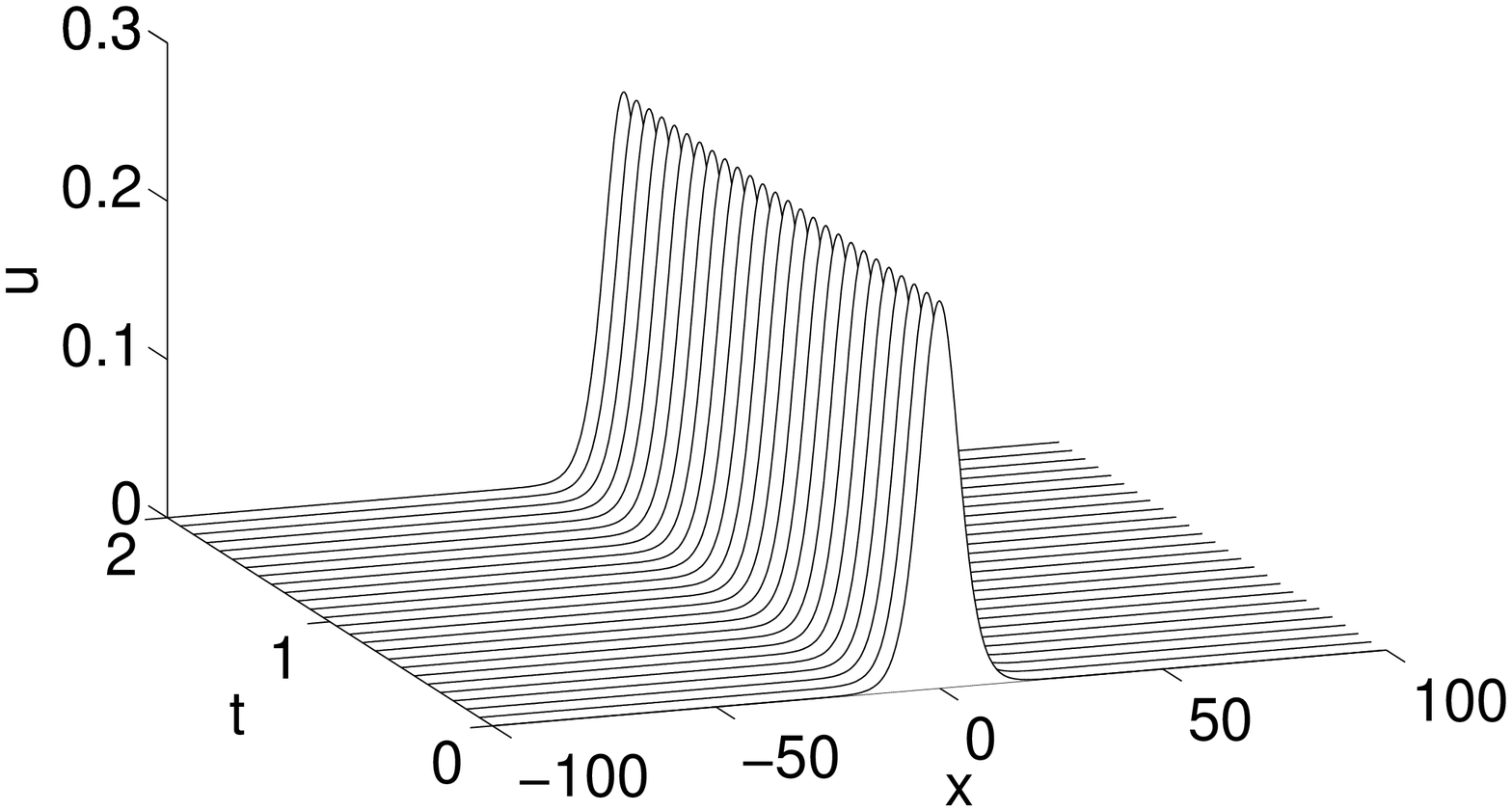}
\end{minipage}
\begin{minipage}[t]{0.44\linewidth}
\hspace*{15pt}
\includegraphics[width=3.2in]{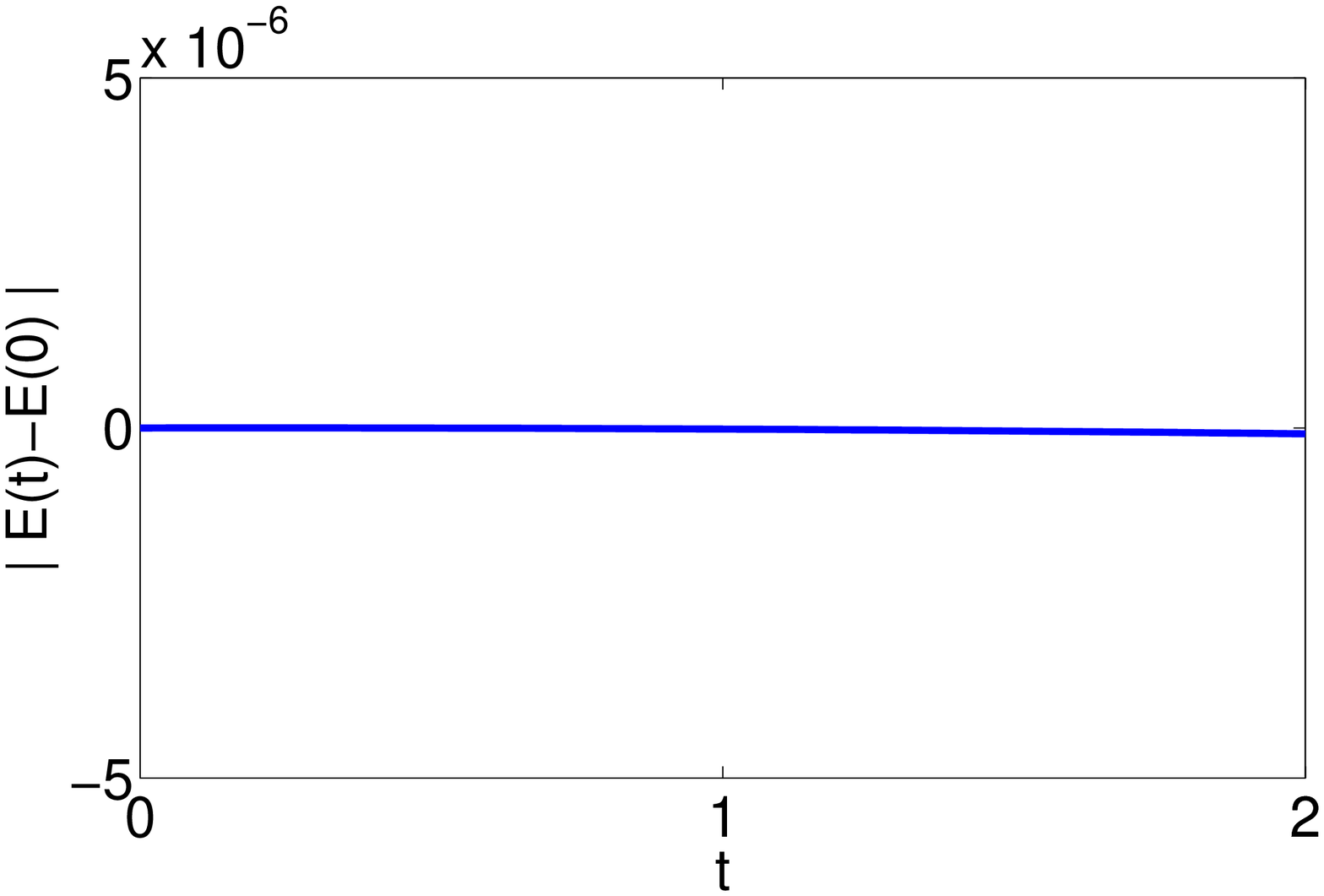}
\end{minipage}
\caption{\small{The time evolution of the wave generated by Petviashvili's  iteration  method (left panel) and the variation of the change in the conserved quantity $E$ (energy) with time (right panel).}}
\end{figure}

\noindent
We point out that the kernel \eqref{kernelsin} corresponds to the kernel giving rise to the IBq equation when $\eta=0$. In this case, if the nonlinearity is quadratic, the  relation between the amplitude $A$ and the speed parameter $c$ is given by $A=1.5(c^2-1)$. In Figure 3, the dashed line shows the  variation of the amplitude with the speed parameter for the IBq equation.
The solid lines show the variation of the amplitude
with the speed parameter corresponding to kernel \eqref{kernelsin} for $\eta=10$, $\eta=5$, $\eta=1$ and  $\eta=0.1$.

\vspace*{10pt}
\begin{figure}[h!bt]
\begin{center}
\includegraphics[scale=0.5]{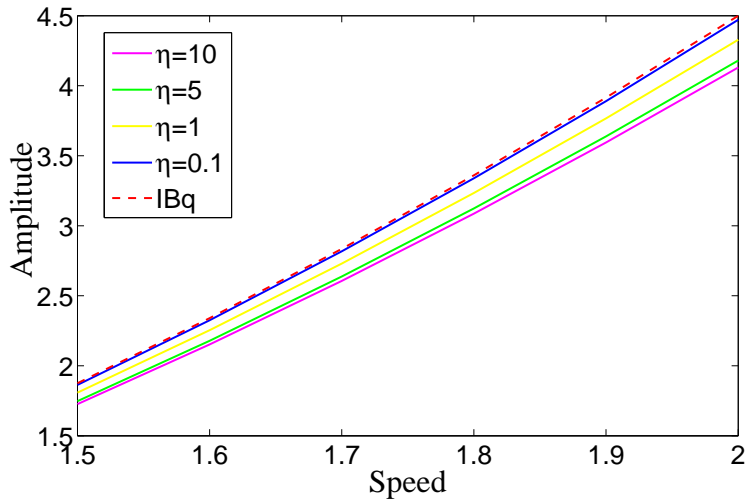}
\end{center}

\vspace*{-12pt}
\caption{ \small{Variation of the amplitude with the speed parameter for the IBq eq. and for
the nonlocal nonlinear wave eq. with the kernel  \eqref{kernelsin} for various values of $\eta$.}}
\end{figure}

\noindent
The question arises naturally how the solitary wave solutions of the nonlocal nonlinear wave equation
behaves when $\eta\rightarrow 0^+$. For this aim, we perform some numerical tests for various values of $\eta$
by using the initial condition
\begin{eqnarray}
&& u(x,0)=\frac{1}{4} {\mbox{sech}}^2(\frac{x}{\sqrt{28}}),   \label{ibq-initial1} \\
&& v(x,0)=\frac{1}{2} \,{\mbox{sech}}^2\left( \frac{x}{\sqrt{28}}\right)\,
            \tanh\left( \frac{x}{\sqrt{28}}\right).  \label{ibq-initial2}
\end{eqnarray}
The given initial condition corresponds to the exact solitary wave solution of the IBq equation. The problem is solved on the interval $-100\leq x \leq 100$ for times up to $T=10$.
In Figure $4$, the solid line shows the exact solution of the IBq equation given in \cite{borluk} which corresponds to the
initial conditions \eqref{ibq-initial1}-\eqref{ibq-initial2}.
The dashed lines show the numerical solution of the nonlocal nonlinear wave equation obtained by the kernel function whose Fourier
transform is given by \eqref{kernelsin} for  $\eta=10$, $\eta=5$, $\eta=1$ and  $\eta=0.1$ at the final time $T=10$. The numerical tests show that the solutions of the nonlocal nonlinear wave equation converge to the  solitary wave solution of the IBq equation as $\eta\rightarrow 0^+$.

\begin{figure}[h!bt]
\begin{minipage}[t]{0.44\linewidth}
\hspace*{-30pt}
\includegraphics[width=3in,height=1.8in]{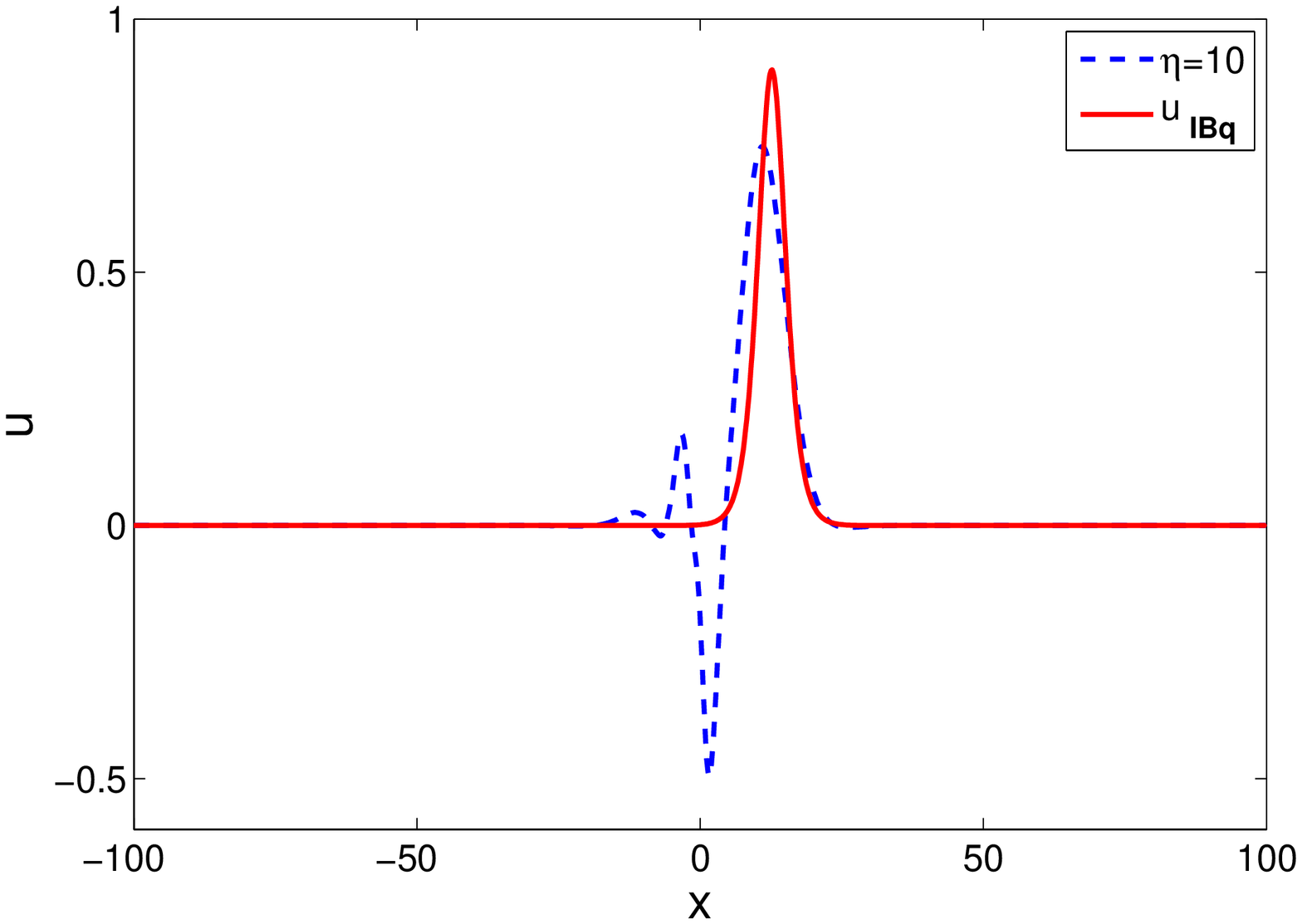}
\end{minipage}
\hspace*{30pt}
\begin{minipage}[t]{0.44\linewidth}
\hspace*{-5pt}
\includegraphics[width=3in,height=1.8in]{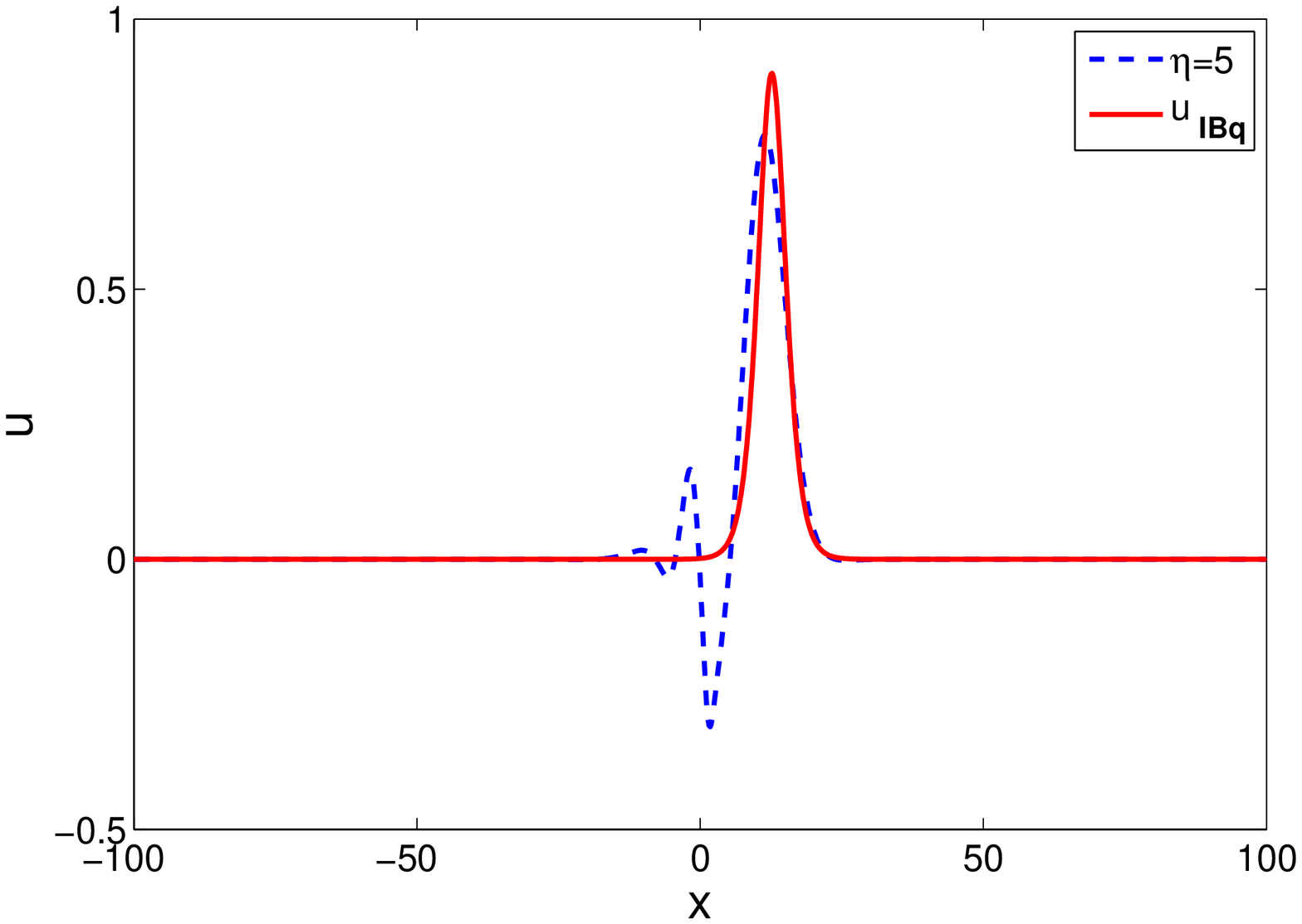}
\end{minipage}
\end{figure}
\begin{figure}[h!bt]

\vspace*{-5pt}
\begin{minipage}[t]{0.44\linewidth}
\hspace*{-30pt}
\includegraphics[width=3in,height=1.8in]{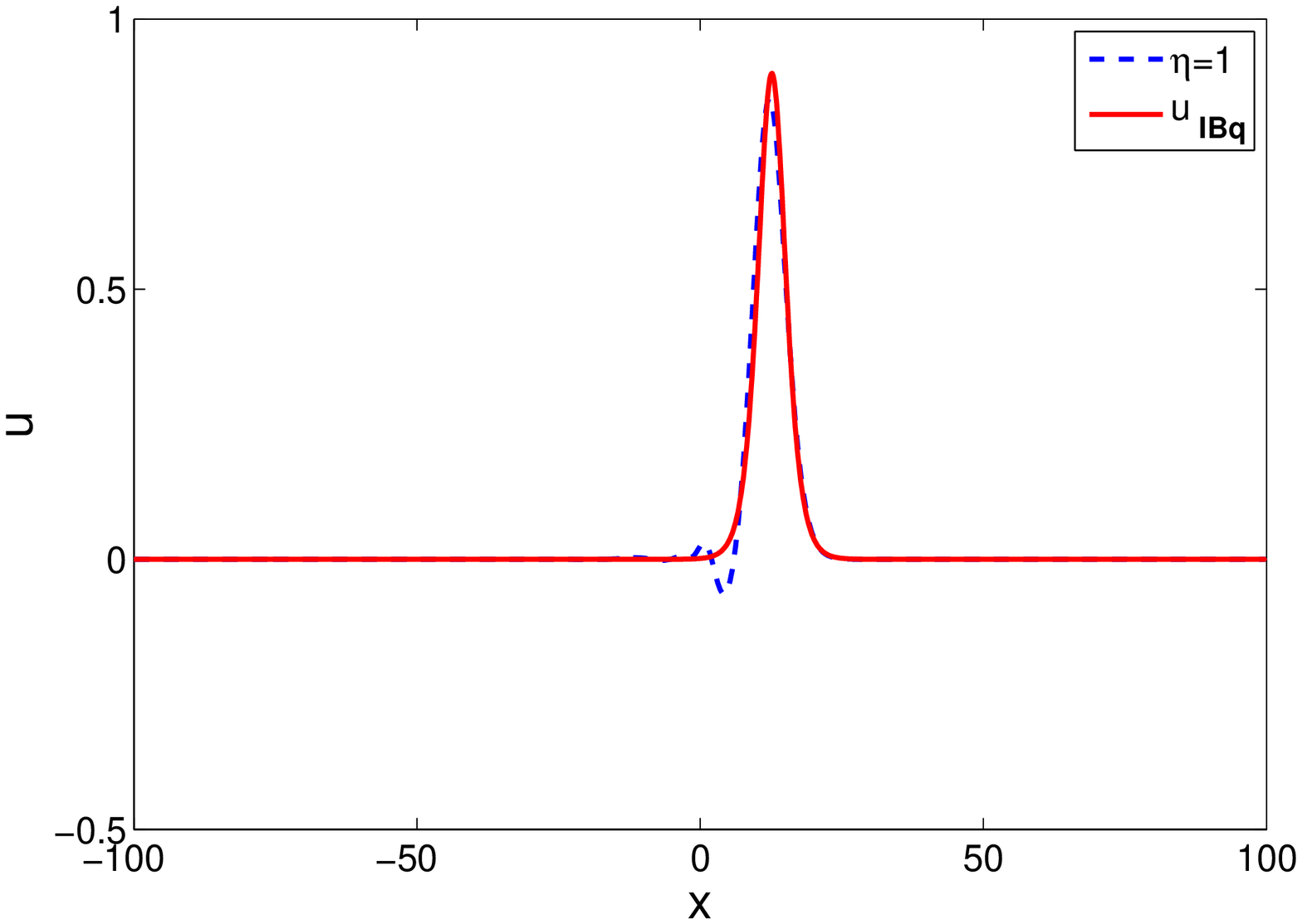}
\end{minipage}
\hspace*{30pt}
\begin{minipage}[t]{0.44\linewidth}
\hspace*{-5pt}
\includegraphics[width=3in,height=1.8in]{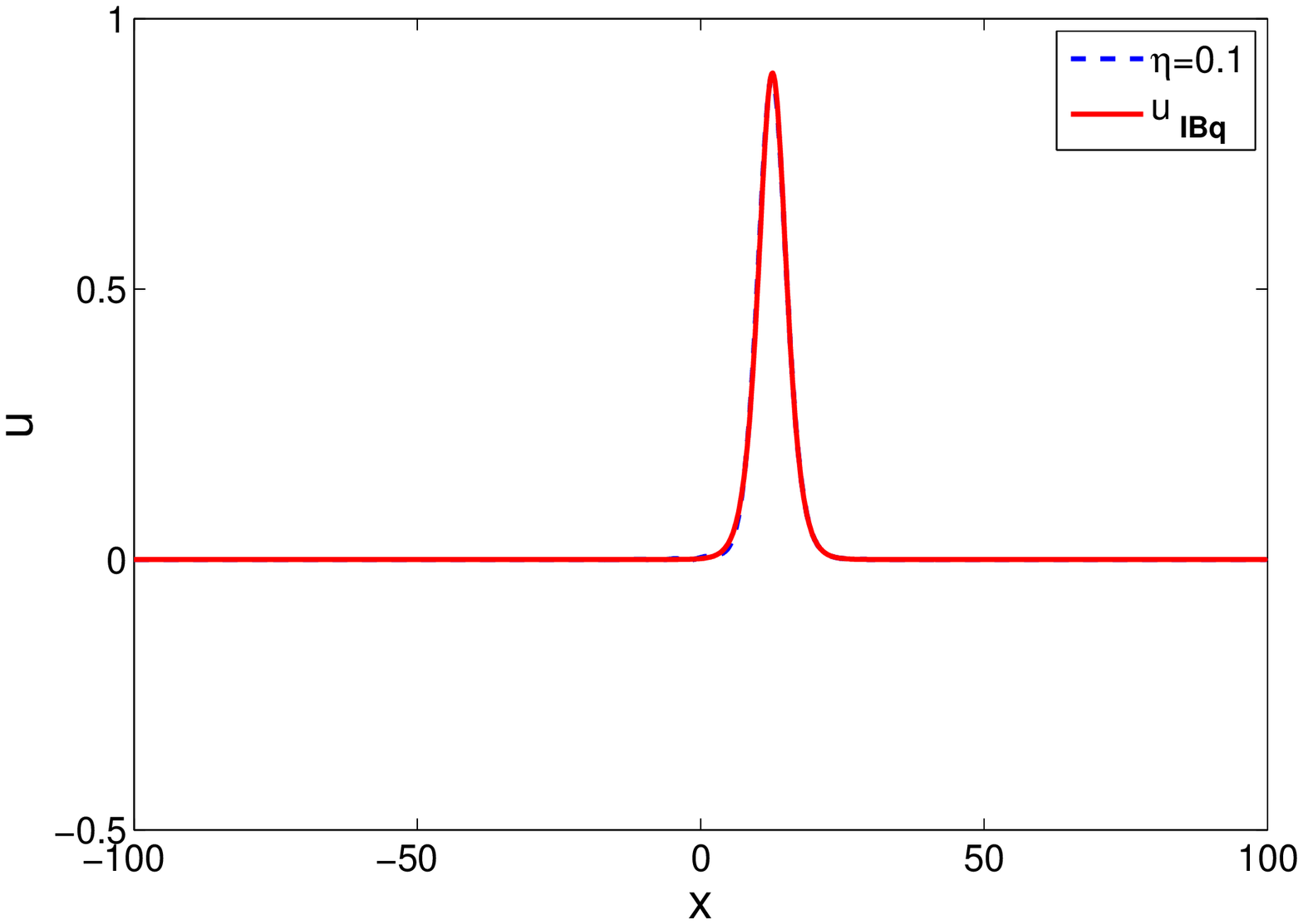}
\end{minipage}
\caption{\small{The exact solution of the IBq equation and the numerical solution of the nonlocal nonlinear wave equation
with \eqref{kernelsin}. }}
\end{figure}

\par
Next, we  consider the  kernel function  whose Fourier transform is
\begin{equation}
\hat{\beta}(k)=\frac{1}{1+k^2+ k^4}+ \frac{\mu}{1+k^4}. \label{kernelfrac}
\end{equation}
Here $\mu$ is a positive parameter. As $\hat{\beta}(k)$ is a decreasing function of $k$,
 the speed must  be chosen as  $c^2>1+\mu$  to satisfy the condition \eqref{condc}. This kernel corresponds to the kernel
giving rise to the HBq equation with $\eta_1=\eta_2=1$  when
$\mu=0$.   To observe the behavior of the solutions as $\mu\rightarrow 0^+$, we perform some numerical tests for various values of $\mu$ by using the initial condition
\begin{eqnarray}
&& u(x,0)=\frac{105}{266}\, {\mbox{sech}}^4(\frac{x}{2\sqrt{13}}), \label{hbq-initial1}  \\
&& v(x,0)= \frac{105}{133^{3/2}}\,{\mbox{sech}}^4\left(\frac{x}{2\sqrt{13}}~\right)\,
            \tanh\left( \frac{x}{2\sqrt{13}}~\right).  \label{hbq-initial2}
\end{eqnarray}
The given initial condition gives rise to the exact solitary wave solution of the HBq equation. In Figure 5, the solid line shows the exact solution of the HBq equation given in \cite{topkarci} which corresponds to the
initial conditions \eqref{hbq-initial1}-\eqref{hbq-initial2}.
The dashed lines show the numerical solution of the nonlocal nonlinear wave equation obtained by the kernel function whose Fourier
transform is given in \eqref{kernelfrac} for  $\mu=10$, $\mu=5$, $\mu=1$ and  $\mu=0.1$ at the final time $T=10$.
The numerical tests show that the solutions of the nonlocal nonlinear wave equation converge to the the solution of the HBq equation
as $\mu\rightarrow 0^+$.

\begin{figure}[h!bt]
\begin{minipage}[t]{0.44\linewidth}
\hspace*{-30pt}
\includegraphics[width=3in,height=1.8in]{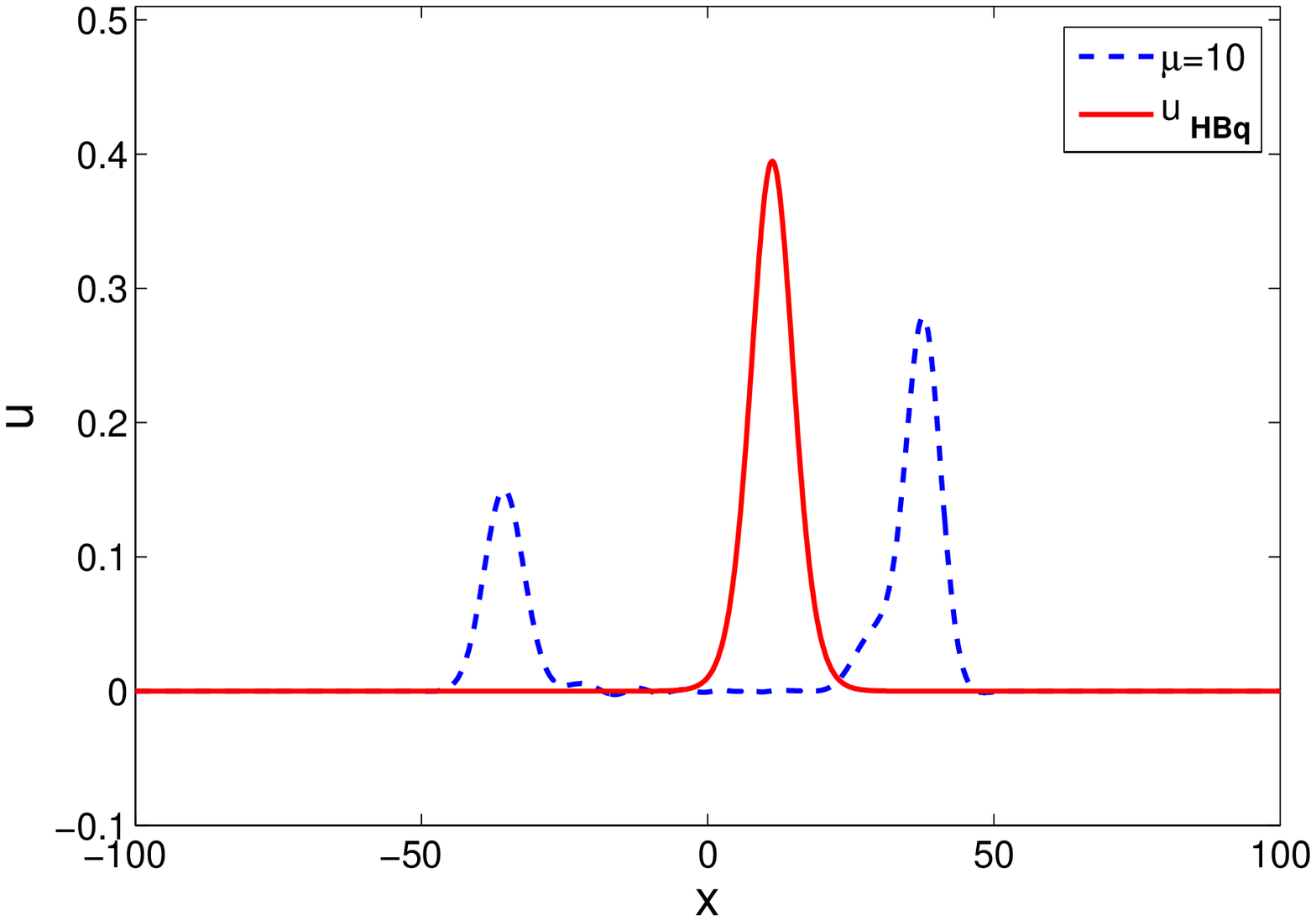}
\end{minipage}
\hspace*{30pt}
\begin{minipage}[t]{0.44\linewidth}
\hspace*{-5pt}
\includegraphics[width=3in,height=1.8in]{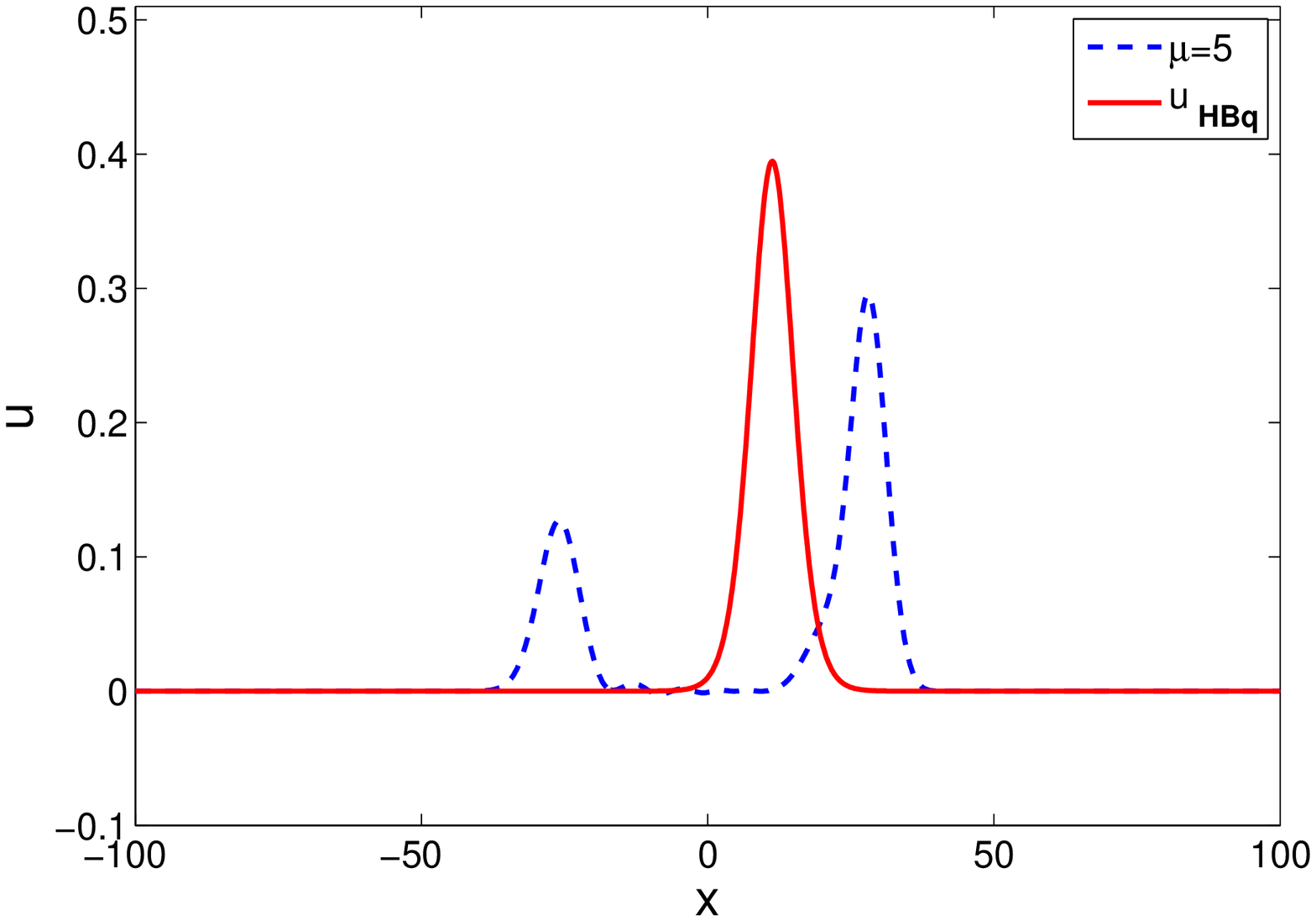}
\end{minipage}
\end{figure}
\begin{figure}[h!bt]

\vspace*{-15pt}
\begin{minipage}[t]{0.44\linewidth}
\hspace*{-30pt}
\includegraphics[width=3in,height=1.8in]{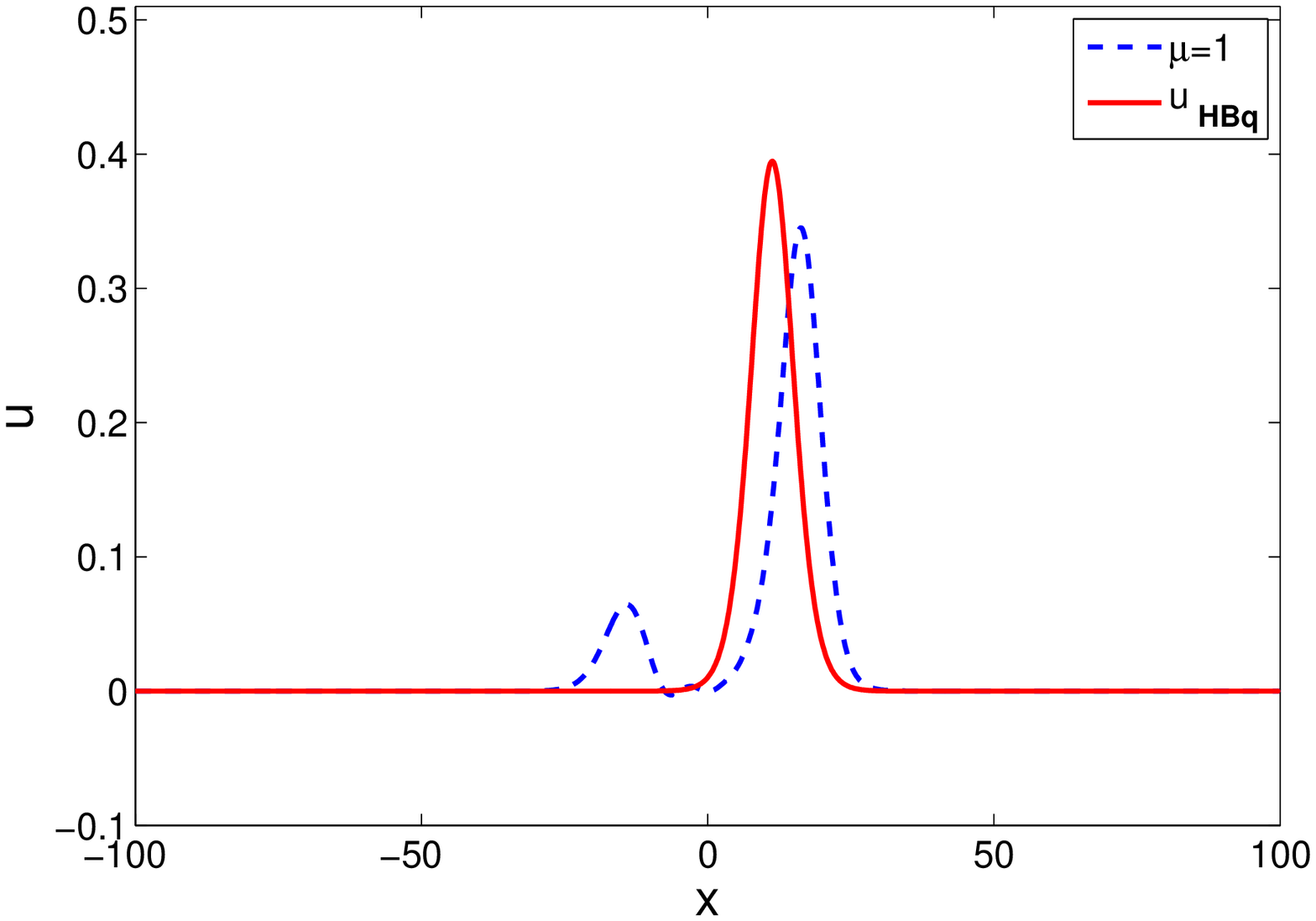}
\end{minipage}
\hspace*{30pt}
\begin{minipage}[t]{0.44\linewidth}
\hspace*{-5pt}
\includegraphics[width=3in,height=1.8in]{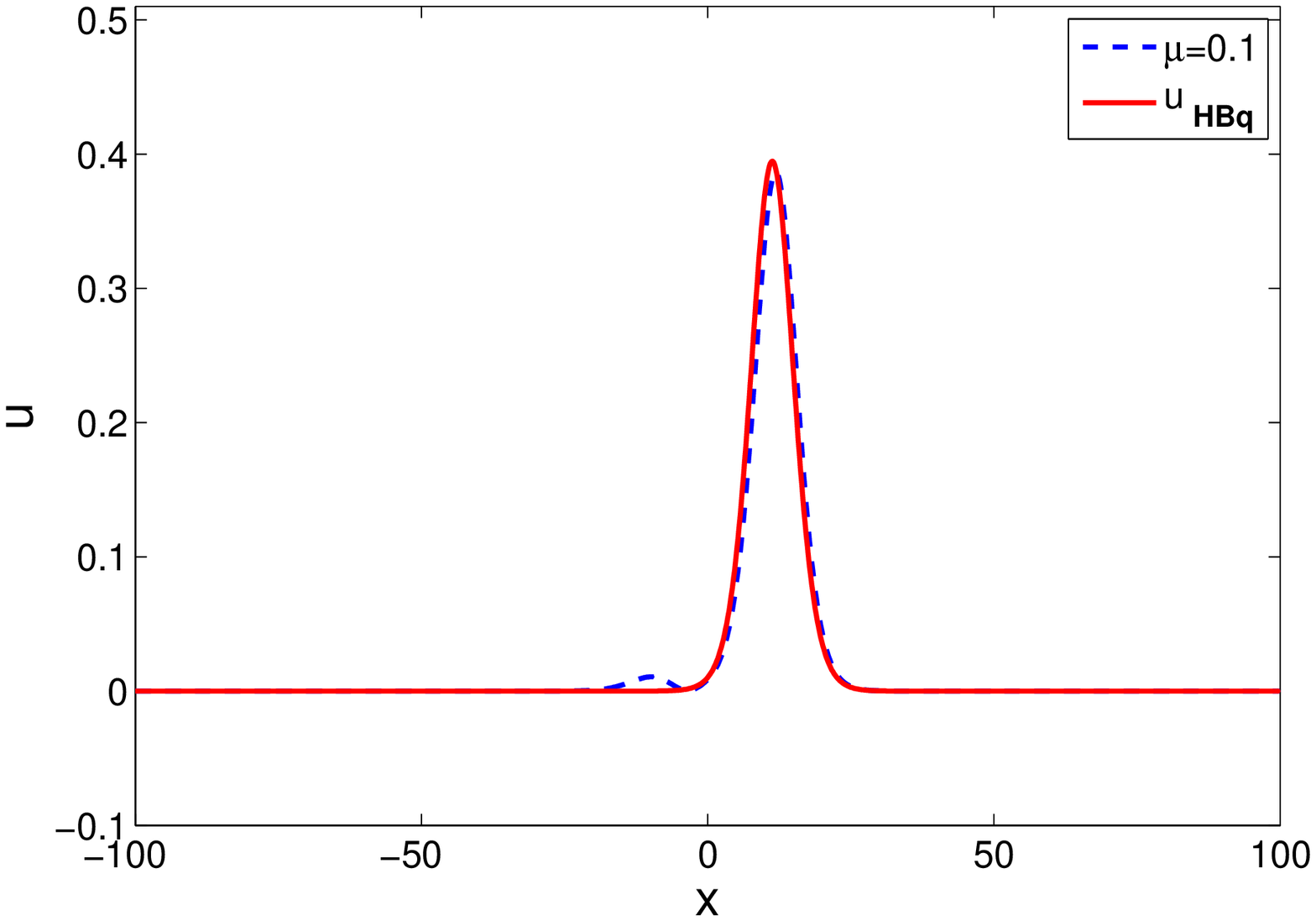}
\end{minipage}
\caption{\small{The exact solution of the HBq equation and the numerical solution of the nonlocal nonlinear wave equation
with \eqref{kernelfrac}.}}
\end{figure}
\par
For the last experiment, our aim is to investigate whether  the above convergence property is also valid for the blow-up solutions or not.
The blow-up theorem for the nonlocal nonlinear wave equation given  in \cite{duruk1} has been restated as follows:

\begin{theorem} (Theorems $3.4$ and $5.2$ of \cite{duruk1})
Let $s>1/2$ and $r\geq 2$. Then, there is some $T>0$ such that the Cauchy problem \eqref{nl}-\eqref{ic}
    is well-posed with solution in $C^2([0,T],H^s(\Omega))$ for initial data $\phi, \psi \in H^s(\Omega)$.
    Furthermore, suppose that $P\phi, P\psi \in L^{2}(\Omega)$ and $~~G(\phi )\in L^{1}(\Omega)$. If there is some
    $\nu >0$ such that
    \begin{equation*}
    p\,g\left( p\right) \leq 2\nu p^2+2\left( 1+2\nu \right) G\left( p\right) \mbox{ for all }p\in \mathbb{R},
    \end{equation*}
    and the initial energy
    \begin{equation*}
    E\left( 0\right) =\left\Vert P\psi \right\Vert^{2}+\left\Vert \phi \right\Vert^{2}+ 2\int_{\Bbb R} G\left( \phi\right) dx<0~,
    \end{equation*}
    then the solution $u$ of the Cauchy problem (\ref{nl})-(\ref{ic}) blows up in finite time.
\end{theorem}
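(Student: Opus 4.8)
\emph{Proof proposal.} The plan is to establish the two assertions in turn. For the short-time well-posedness I would use a contraction mapping in $C([0,T];H^s_p(\Omega))\times C([0,T];H^s_p(\Omega))$ for $T$ small. The structural point is that, under \eqref{condbeta} with $r\ge2$, the Fourier multiplier $k\mapsto k^2\widehat\beta(k)$ is bounded, since $k^2\widehat\beta(k)\le C(1+k^2)^{1-r/2}\le C$; hence $v\mapsto\beta*v_{xx}$, i.e. $\mathcal H$ applied to $v_{xx}$, is a bounded operator on $H^s_p(\Omega)$. Rewriting \eqref{nl}--\eqref{ic} as a first-order system for $(u,u_t)$ and then in Duhamel form, the nonlinear map is Lipschitz on bounded sets because $H^s_p(\Omega)$ is a Banach algebra for $s>1/2$ and $g\in C^2$ yields a Moser-type estimate (the ingredient behind Lemma~\ref{lemma2}). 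A fixed point gives a unique local solution $u\in C^1([0,T];H^s_p(\Omega))$; substituting back into \eqref{nl} and using boundedness of $v\mapsto\beta*v_{xx}$ upgrades the time regularity to $C^2$, and continuous dependence on the data follows from the same estimates.

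For the finite-time blow-up I would run the concavity (Levine-type) method with the functional $F(t)=\|Pu(t)\|^2$. As a preliminary, applying $P^2$ to \eqref{nl} and using the symbol identity $\frac{1}{k^2\widehat\beta(k)}\cdot(-k^2\widehat\beta(k))=-1$ gives $\partial_t^2(P^2u)=-(u+g(u))$; combined with $P\phi,P\psi\in L^2(\Omega)$, $G(\phi)\in L^1(\Omega)$ and $u\in C^2([0,T];H^s_p(\Omega))$, this keeps $Pu(t),Pu_t(t)$ in $L^2(\Omega)$ throughout the existence interval and yields the energy conservation $E(t)\equiv E(0)$. Then $F'(t)=2(Pu_t,Pu)$ and, using self-adjointness of $P$,
\begin{equation*}
F''(t)=2\|Pu_t\|^2+2\big(P^2u_{tt},u\big)=2\|Pu_t\|^2-2\|u\|^2-2\big(g(u),u\big).
\end{equation*}

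Next I would use the hypothesis $p\,g(p)\le2\nu p^2+2(1+2\nu)G(p)$ in the form $(g(u),u)\le2\nu\|u\|^2+2(1+2\nu)\int_{\mathbb R}G(u)\,dx$, and then substitute $\int_{\mathbb R}G(u)\,dx=\tfrac12\big(E(0)-\|Pu_t\|^2-\|u\|^2\big)$ from the conserved energy; the $\|u\|^2$ contributions cancel exactly and one is left with $F''(t)\ge(4+4\nu)\|Pu_t(t)\|^2-2(1+2\nu)E(0)$. Since $E(0)<0$ this gives in particular $F''\ge c_0:=-2(1+2\nu)E(0)>0$, so $F'$ is strictly increasing and $F$ is positive and increasing past some finite time. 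On the other hand Cauchy--Schwarz gives $(F')^2=4(Pu_t,Pu)^2\le4\|Pu_t\|^2F$, so with $\alpha=\nu$ one obtains $FF''-(1+\alpha)(F')^2\ge c_0F>0$, hence $(F^{-\alpha})''=-\alpha F^{-\alpha-2}\big(FF''-(1+\alpha)(F')^2\big)\le0$. Thus $F^{-\alpha}$ is positive, concave, and — once $F'>0$ — strictly decreasing, so it must reach $0$ at a finite time, forcing $F(t)=\|Pu(t)\|^2\to\infty$; since a solution existing globally in $C^2([0,T];H^s_p(\Omega))$ with $Pu\in L^2(\Omega)$ would keep $F$ finite, the solution must cease to exist in finite time. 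The case $F'(0)\le0$ causes no difficulty, since $F''\ge c_0$ forces $F'>0$ (and hence, $F$ being a norm squared, $F>0$) beyond some finite time, and one may restart the argument there.

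I expect the main obstacle to be the functional-analytic bookkeeping around the unbounded operator $P$: verifying that $Pu(t)$ and $Pu_t(t)$ stay in $L^2(\Omega)$ on the whole existence interval, that $F$ is genuinely twice differentiable with the stated formulas given only $u\in C^2([0,T];H^s_p(\Omega))$, and that the symbol identities and the self-adjointness of $P$ are applied to objects that really lie in $L^2(\Omega)$. Once that framework is secured the remainder is essentially algebra; the one place where care is needed is to insert the conserved energy at precisely the step where the $\|u\|^2$ terms cancel, so that the coefficient $4+4\nu$ — and hence the admissible exponent $\alpha=\nu$ in the concavity inequality — comes out correctly.
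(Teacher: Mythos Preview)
The paper does not supply its own proof of this theorem; it is merely restated from \cite{duruk1} (Theorems~3.4 and~5.2 there) in order to motivate the numerical blow-up experiments that follow. Your outline is precisely the strategy used in that reference: local well-posedness via a fixed-point argument, exploiting that for $r\ge2$ the multiplier $k^{2}\widehat\beta(k)\le C(1+k^{2})^{1-r/2}$ is bounded so that $v\mapsto\beta*v_{xx}$ acts boundedly on $H^{s}$; and blow-up via Levine's concavity method applied to $F(t)=\|Pu(t)\|^{2}$. Your key computation is correct: from $P^{2}u_{tt}=-(u+g(u))$ one gets $F''=2\|Pu_t\|^{2}-2\|u\|^{2}-2(g(u),u)$, and inserting the structural hypothesis on $g$ together with the conserved energy yields $F''\ge(4+4\nu)\|Pu_t\|^{2}-2(1+2\nu)E(0)$ with the $\|u\|^{2}$ terms cancelling exactly, whence $FF''-(1+\nu)(F')^{2}\ge -2(1+2\nu)E(0)\,F>0$ and the standard concavity conclusion for $F^{-\nu}$. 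The functional-analytic caveats you flag about the unbounded operator $P$ (well-definedness of $Pu$, propagation of $Pu,Pu_t\in L^{2}$, differentiability of $F$) are exactly the points that require care in \cite{duruk1}, but once the hypotheses $P\phi,P\psi\in L^{2}$ and $G(\phi)\in L^{1}$ are in force they are handled there without difficulty.
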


\noindent
For the numerical tests,  we use the kernel whose Fourier transform is given  in \eqref{kernelsin} and we choose the initial conditions \begin{equation}
 \phi(x)=4\left(\frac{2x^{2}}{3}-1\right) e^{-\frac{~x^{2}}{3}} , \hspace*{30pt} \psi(x)=\left(x^{2}-1\right)e^{-\frac{~x^{2}}{3}}
 \label{inamp}
\end{equation}
as in \cite{godefroy}. Thus, the conditions for the blow-up  theorem are satisfied for $\nu=\frac{1}{4}$.
The problem is solved on the interval $-10 \leq x \leq 10$ for times up to $T=3$.
In Figure 6, we present the variation of the $L_\infty$ norm of the numerical solution obtained by using the Fourier pseudo-spectral scheme. The numerical results strongly indicate that the  blow-up time of the nonlocal nonlinear wave equation converges
to $t^* \approx 1.8$,  which is the blow-up time of the IBq equation, as $\eta\rightarrow 0^+$.

\begin{figure}[h!bt]
\centering
\includegraphics[scale=0.46]{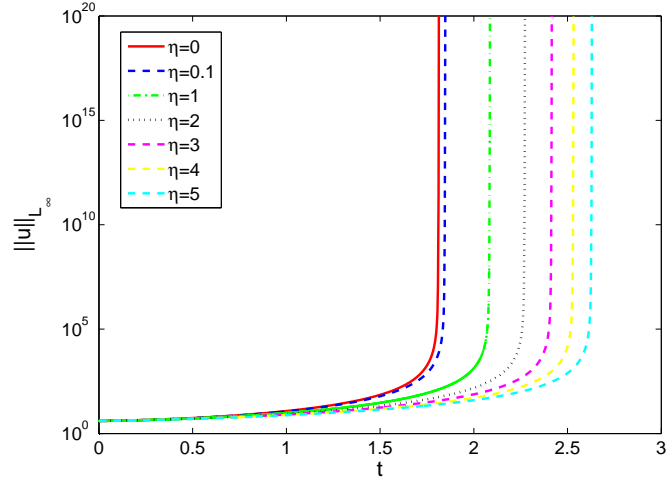}
\caption{ \small{$L_\infty$ norm of the numerical solution for the increasing time. }}
\end{figure}
\newpage
\vspace*{10pt}
\noindent \textbf{Acknowledgement:} This work has been supported by the Scientific and Technological Research Council of Turkey
(TUBITAK) under the project MFAG-113F114. The authors  gratefully  acknowledge
  to the anonymous reviewers for the constructive comments and valuable suggestions which improved the original paper.


\end{document}